\newcommand{\norm}[1]{\left\lVert#1\right\rVert}
\newtheorem{theorem}{Theorem}[section]
\newtheorem{proposition}[theorem]{Proposition}
\newtheorem{lemma}[theorem]{Lemma}
\theoremstyle{definition}
\newtheorem*{conjecture}{Conjecture}
\theoremstyle{remark}
\renewcommand\Re{\operatorname{Re}}
\newcommand{\eps}{\varepsilon}
\DeclareMathOperator{\spann}{span}
\DeclareMathOperator{\li}{li}
\author{Frederik Broucke}
\thanks{F. Broucke was supported by a postdoctoral fellowship (grant number 12ZZH23N) of the Research Foundation -- Flanders.}
\address{Department of Mathematics: Analysis, Logic and Discrete Mathematics, Ghent University, {Krijgslaan} 281, 9000 Gent, Belgium}
\email{fabrouck.broucke@ugent.be}
\author{Athanasios Kouroupis}
\address{Department of Mathematical Sciences, Norwegian University of Science and Technology (NTNU), 7491 Trondheim, Norway}
\email{athanasios.kouroupis@ntnu.no}
\author{Karl-Mikael Perfekt}
\address{Department of Mathematical Sciences, Norwegian University of Science and Technology (NTNU), 7491 Trondheim, Norway}
\email{karl-mikael.perfekt@ntnu.no}
\title{A note on Bohr's theorem for Beurling integer systems}
\begin{document}
	
\begin{abstract}
Given a sequence of frequencies $\{\lambda_n\}_{n\geq1}$, a corresponding generalized Dirichlet series is of the form $f(s)=\sum_{n\geq 1}a_ne^{-\lambda_ns}$. We are interested in multiplicatively generated systems, where each number $e^{\lambda_n}$ arises as a finite product of some given numbers $\{q_n\}_{n\geq 1}$, $1 < q_n \to \infty$, referred to as Beurling primes. In the classical case, where $\lambda_n = \log n$, Bohr's theorem holds: if $f$ converges somewhere and has an analytic extension which is bounded in a half-plane $\{\Re s> \theta\}$, then it actually converges uniformly in every half-plane $\{\Re s> \theta+\varepsilon\}$, $\varepsilon>0$. We prove, under very mild conditions, that given a sequence of Beurling primes, a small perturbation yields another sequence of primes such that the corresponding Beurling integers satisfy Bohr's condition, and therefore the theorem. Applying our technique in conjunction with a probabilistic method, we find a system of Beurling primes for which both Bohr's theorem and the Riemann hypothesis are valid. This provides a counterexample to a conjecture of H. Helson concerning outer functions in Hardy spaces of generalized Dirichlet series.
\end{abstract}
\maketitle
\section{Introduction}
For an increasing sequence of positive frequencies $\lambda=\{\lambda_n\}_{n\geq 1}$, and a generalized Dirichlet series
\begin{equation*}
	f(s)=\sum\limits_{n\geq1}a_n e^{-\lambda_ns},
\end{equation*}
the abscissas $\sigma_c, \sigma_u,$ and $\sigma_a$ of point-wise, uniform, and absolute convergence are defined as in the classical theory of Dirichlet series \cite{HR64}. In this article we wish to find sets of frequencies such that the analogue of a theorem of Bohr \cite{BOH13} holds: if $\sigma_c(f) < \infty$ and $f$ has a bounded analytic extension to a half-plane $\{\Re s> \theta\}$, then $\sigma_u(f) \leq \theta$. The problem of finding frequencies for which the abscissas of bounded and uniform convergence always coincide, which originated with Bohr and Landau \cite{LAN21}, has recently been revisited \cite{BAY22,SC20} with the context of Hardy spaces of Dirichlet series in mind. Indeed, Bohr's theorem is essentially a necessity for a satisfactory Hardy space theory, see \cite[Ch. 6]{QQ20}.

An important class of frequencies were introduced by Beurling \cite{BE37}. Given an arbitrary  increasing sequence $q=\{q_n\}_{n\geq1}$, $1 < q_n\rightarrow \infty$, such that $\{\log q_n\}_{n\geq1}$ is linearly independent over $\mathbb{Q}$, we will denote by  $\mathbb{N}_{q}=\{\nu_n\}_{n\geq 1}$ the set of numbers that can be written (uniquely) as finite products with factors from $q$, ordered in an increasing manner.  The numbers $q_n$ are known as Beurling primes, and the numbers $\nu_n$ are Beurling integers. The corresponding generalized Dirichlet series are of the form
$$f(s)=\sum\limits_{n\geq 1} a_n\nu_n^{-s}.$$

There are a number of criteria to guarantee the validity of Bohr's theorem for frequencies $\{\lambda_n\}_{n\geq 1}$.  Bohr's original condition asks for the existence of $c_1, c_2>0$ such that
	\begin{equation}\label{BC}
		\lambda_{n+1}-\lambda_n \geq c_1e^{-c_2\lambda_{n+1}},\qquad n\in\mathbb{N}.
	\end{equation}
	Landau relaxed the condition somewhat: for every $\delta>0$ there should be a $c>0$ such that
	\begin{equation}\label{LC}
	 \lambda_{n+1}-\lambda_n \geq ce^{-e^{\delta\lambda_{n+1}}},\qquad n\in\mathbb{N}.
	\end{equation}
	Landau's condition was recently relaxed further by Bayart \cite{BAY22}: for every $\delta>0$ there should be a $C>0$ such that for every $n\geq1$ it holds that
	\begin{equation}\label{NC}
		\inf_{m > n} \left( \log\left(\frac{\lambda_{m}+\lambda_n}{\lambda_{m}-\lambda_n}\right)+(m-n) \right) \leq Ce^{\delta\lambda_{n}}.
	\end{equation}
For frequencies of Beurling type, $\lambda_n = \log \nu_n$, these conditions have natural reformulations. For example, Bohr's condition \eqref{BC} is equivalent to the existence of $c_1, c_2 > 0$ such that
\begin{equation}
\label{BCalt}
\nu_{n+1} - \nu_n \geq c_1 \nu_{n+1}^{-c_2}.
\end{equation}
Conditions \eqref{BC}-\eqref{BCalt} are usually very difficult to check for any given Beurling system, since they involve the distances between the corresponding Beurling integers. 
Furthermore, they often fail. This is especially true if one wants to retain properties of the ordinary integers, such as the asympotic behaviour of the counting function $N_q(x) = \sum_{\nu_n \leq x} 1$, see e.g. \cite{Gran91} for the subtleties that arise already when dealing with a finite sequence $q = (q_1, \ldots, q_N)$ of primes.

One motivation for considering Beurling integers is to investigate the properties of the $q$-zeta function
\begin{equation*}
\zeta_q(s)=\sum\limits_{n\geq 1}\nu_n^{-s}=\prod\limits_{n\geq 1}\frac{1}{1-q_n^{-s}},
\end{equation*}
and their interplay with the counting functions
\[
N_q(x)=\sum_{\nu_n\leq x}1, \qquad 	\pi_q(x)=\sum_{q_n\leq x}1. 
\]
As an example, Beurling \cite{BE37} himself showed that the condition
\begin{equation}\label{ber}
	N_q(x)=ax+ O(\frac{x}{(\log x)^\gamma}),\qquad\text{ for some }\gamma>\frac{3}{2},
\end{equation}
implies the analogue of the prime number theorem,
\begin{equation}\label{PNT}
	\pi_q(x):=\sum_{q_n\leq x}1\sim \frac{x}{\log x}.
\end{equation}
We refer to \cite{DZ16} for a comprehensive overview of further developments.

In Section 2 we begin with a preparatory result which is interesting in its own right.  It states that starting with the classical set of primes numbers we can add almost any finite sequence of Beurling primes while retaining the validity of Bohr's theorem.
\begin{theorem}\label{main}
	Let $\{p_n\}_{n\geq1}$ be the sequence of ordinary prime numbers and let $N \geq 1$. Then Bohr's condition \eqref{BC} holds for the Beurling integers generated by the primes
	$$q = \{p_n\}_{n\geq1}\bigcup\{q_j\}_{j=1}^N,$$
	for almost every choice $(q_1,\dots,q_N)\in (1,\infty)^N$.
\end{theorem}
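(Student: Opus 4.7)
The plan is to verify Bohr's condition in the equivalent form \eqref{BCalt} via a Borel-Cantelli argument in the parameter $\vec{q} = (q_1, \ldots, q_N)$. Since $(1, \infty)^N$ is exhausted by compact boxes $K = [1 + \delta, R]^N$, it suffices to prove the conclusion for a.e.\ $\vec{q} \in K$, for each such $K$. For two distinct Beurling integers $\nu = \prod_i p_i^{e_i} \prod_j q_j^{k_j}$ and $\nu' = \prod_i p_i^{f_i} \prod_j q_j^{l_j}$, set $g_i = e_i - f_i$ and $a_j = k_j - l_j$. Then
\[
\log \nu - \log \nu' = \sum_i g_i \log p_i + \sum_j a_j \log q_j,
\]
and $|\nu - \nu'| \asymp \max(\nu, \nu') \cdot |\log(\nu/\nu')|$ whenever $\nu/\nu'$ is close to $1$ (which is implied by the bound one is trying to check). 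Hence it suffices to prove a lower bound of order $\max(\nu, \nu')^{-c_2 - 1}$ on this linear form, for all pairs of distinct Beurling integers.

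When $\vec{a} = 0$, $\nu$ and $\nu'$ share a common $\vec{q}$-factor and $|\nu - \nu'|$ is a positive ordinary-integer multiple of $\prod_j q_j^{k_j} \geq 1$, so $|\nu - \nu'| \geq 1$ trivially. Assume now $\vec{a} \neq 0$, and fix any $c_2 > 1$ (e.g.\ $c_2 = 2$). For integers $T \geq 1$, let $A_T \subset K$ be the set of $\vec{q}$ for which some pair $(\nu, \nu')$ with $\vec{a} \neq 0$ and $\max(\nu, \nu') \in (e^T, e^{T+1}]$ violates $|\nu - \nu'| \geq \max(\nu, \nu')^{-c_2}$. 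For each fixed exponent-tuple $(\vec{e}, \vec{k}, \vec{f}, \vec{l})$, the bad condition becomes a thin linear slab of width $O(e^{-(c_2+1)T})$ in the $\log q_j$-coordinates; choosing $j_0$ with $a_{j_0} \neq 0$ and applying Fubini in $q_{j_0}$, this set has measure $O_K(e^{-(c_2+1)T})$ in $K$. The elementary bound $N_q(X) \leq X \prod_j (1 - q_j^{-1})^{-1} \leq C_K X$, uniform on $K$, yields at most $O_K(e^{2T})$ pairs. Therefore $|A_T| \lesssim_K e^{(1 - c_2)T}$, which is summable. By Borel-Cantelli, a.e.\ $\vec{q} \in K$ lies in only finitely many $A_T$, and the finitely many exceptional pairs are absorbed by shrinking $c_1$.

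The main obstacle is balancing the per-pair measure bound against the count of pairs: the $O(e^{-(c_2+1)T})$ decay coming from the linear-form slab must beat the $O(e^{2T})$ count coming from $N_q(X) = O(X)$, and the choice $c_2 > 1$ supplies the required margin. The decay itself hinges on $\log \nu - \log \nu'$ being a nonzero linear form in the $\log q_j$, precisely the condition $\vec{a} \neq 0$ peeled off at the outset; without this, the reduction to Diophantine approximation on the $\log q_j$ would collapse.
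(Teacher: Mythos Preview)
Your argument is correct, but the route differs from the paper's. The paper isolates a lemma (Lemma~\ref{distancebc}): if a Beurling system already satisfies \eqref{BCalt}, then adjoining a single extra prime $q'$ preserves it for almost every $q'>1$. Theorem~\ref{main} then follows by iterating $N$ times starting from the ordinary integers (where $d_n\ge1$). The Borel--Cantelli there runs in the single new variable $q'$ over triples $(j,n,m)$ controlling $\lvert(q')^j-\nu_n/\nu_m\rvert$, with summability supplied by $\zeta_q(C_0)^2<\infty$. You instead run Borel--Cantelli once in all $N$ parameters simultaneously, sliced by the dyadic scale $T$ of $\max(\nu,\nu')$, reducing each bad event to a thin affine slab in $(\log q_1,\dots,\log q_N)$. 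Your version is more direct for this particular statement and delivers the admissible exponent $c_2>1$ in one stroke; the paper's one-prime-at-a-time lemma is more modular and is reused verbatim in the proof of Theorem~\ref{primesae}. One minor point worth making explicit: when you absorb the finitely many exceptional pairs by shrinking $c_1$, you are using that $\nu\neq\nu'$, i.e.\ the $\mathbb{Q}$-linear independence of $\{\log p_n\}\cup\{\log q_j\}$; this fails only on a null set of $\vec{q}$, so the conclusion is unaffected.
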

Sequences of Beurling primes of the type considered in Theorem~\ref{main} previously appeared in \cite{Th22}.

Our next result requires more careful analysis.
\begin{theorem}\label{primesae}
	Let  $q=\{q_n\}_{n\geq 1}$ be an increasing sequence of primes such that $q_1>1$ and  $\sigma_c(\zeta_q)<\infty$. Then, for every $A > 0$ there exists a sequence of Beurling primes $\tilde{q}=\{\tilde{q}_n\}_{n\geq 1}$ for which Bohr's condition \eqref{BC} holds and 
	\begin{equation*}
|q_n-\tilde{q}_n|\leq q_n^{-A},\qquad n\in\mathbb{N}.
	\end{equation*}
\end{theorem}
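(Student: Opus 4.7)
The strategy is an inductive construction of $\tilde q = \{\tilde q_n\}$, one prime at a time, selecting $\tilde q_n$ from the interval $I_n := [q_n - q_n^{-A}, q_n + q_n^{-A}]$ via a measure-theoretic argument in the spirit of Theorem~\ref{main}. Fix constants $c_1, c_2 > 0$ in advance, with $c_2$ taken large in terms of $A$ and $\sigma := \sigma_c(\zeta_q)$, and $c_1$ eventually taken small. Assume inductively that $\tilde q_1, \ldots, \tilde q_{n-1}$ have been chosen so that the multiplicative semigroup $\mathcal M_{n-1}$ they generate already satisfies Bohr's condition \eqref{BCalt} with these constants. We select $\tilde q_n \in I_n$ so that (a) $\log \tilde q_n$ is linearly independent over $\mathbb{Q}$ from $\log \tilde q_1, \ldots, \log \tilde q_{n-1}$, which excludes only a countable subset of $I_n$, and (b) every pair of distinct Beurling integers $\mu, \nu$ of $\{\tilde q_1, \ldots, \tilde q_n\}$ in which $\tilde q_n$ appears at least once satisfies $|\mu - \nu| \geq c_1 \max(\mu,\nu)^{-c_2}$.

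For (b), the new Beurling integers are of the form $\tilde q_n^k m$ with $k \geq 1$ and $m \in \mathcal M_{n-1}$. For a fixed triple $(k, m, \nu)$ with $\nu \in \mathcal M_{n-1}$ \emph{old}, the map $x \mapsto x^k m$ is monotone on $I_n$ with derivative $\asymp k q_n^{k-1} m$, so the set of bad $x$, those with $|x^k m - \nu| < c_1 (x^k m)^{-c_2}$, has measure $\lesssim c_1 (q_n^k m)^{-c_2}/(k q_n^{k-1} m)$. The perturbation estimate $|\tilde q_j - q_j| \leq q_j^{-A}$ implies $|\log \tilde\nu - \log\nu| \lesssim \log \nu$ uniformly, so the assumption $\sigma_c(\zeta_q) < \infty$ yields a polynomial counting bound $N_{\tilde q_{\leq n-1}}(x) \lesssim x^{\sigma'}$ for some $\sigma'$ slightly exceeding $\sigma$, with constant independent of $n$. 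Hence for each $(k, m)$ the number of relevant $\nu$ is $\lesssim (q_n^k m)^{\sigma'}$, and summing over $\nu$, over $m \in \mathcal M_{n-1}$ (convergent as $\sum m^{-(c_2 + 1 - \sigma')}$ once $c_2 > 2\sigma' - 1$), and geometrically over $k \geq 1$, produces total bad measure $\lesssim c_1 q_n^{\sigma' - c_2}$, dominated by the $k = 1$ term. Taking $c_2 \geq A + 2\sigma' + 1$ and $c_1$ small enough makes this strictly less than $|I_n| = 2q_n^{-A}$, so an admissible $\tilde q_n$ exists. The "doubly new" case $(\tilde q_n^k m, \tilde q_n^{k'} m')$ is handled similarly: if $k \neq k'$ via the derivative of $x^{k'}(x^{k-k'} m - m')$, and if $k = k'$ automatically from the inductive hypothesis applied to $\tilde q_n^k |m - m'|$.

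The principal technical obstacle is the uniformity of $c_1, c_2$ throughout all steps of the induction. The uniform counting-function bound for the perturbed system and the parameter choices must close under induction independently of $n$; this requires a careful balance of exponents in terms of $\sigma$, $A$, and $q_1$. Nonetheless the dependencies are transparent: $\sigma'$ depends only on $\sigma$, $A$, and $q_1$, after which $c_2$ is fixed large in terms of $\sigma'$ and $A$, and finally $c_1$ is taken small enough to accommodate the base case $n=1$ (where one only needs $\tilde q_1 - 1 \geq c_1 \tilde q_1^{-1-2c_2}$, valid for small $c_1$ since $\tilde q_1 > 1$). The induction then produces the desired sequence $\tilde q$ with $|\tilde q_n - q_n| \leq q_n^{-A}$ and Bohr's condition \eqref{BC} for the associated Beurling integers.
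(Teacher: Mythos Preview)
Your proposal is correct and follows essentially the same inductive measure-theoretic argument as the paper: at each step one selects $\tilde q_n$ from the short interval $I_n$ by bounding the Lebesgue measure of the ``bad'' choices via a Borel--Cantelli-type sum over triples, with the crucial issue being that all constants stay uniform throughout the induction. The one organizational difference is that you exploit the free constant $c_1$ in Bohr's condition \eqref{BCalt}, taking it small at the end to absorb the implicit constants and thereby treat all primes uniformly from the outset; the paper instead fixes $c_1=1$, handles the finitely many primes below a threshold $B(\varepsilon)$ separately via Theorem~\ref{main}, and only invokes the measure argument for $q_{k+1}\ge B$. Your route is a mild simplification of the same idea.
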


Combining our techniques with a probabilistic method from \cite{BV21}, which refined previous work of Diamond, Montgomery, Vorhauer \cite{DMV06} and Zhang \cite{ZHA07}, we are able to construct a system of Beurling primes that satisfies Bohr's theorem as well as the Riemann and Lindel\"of hypothesis.
\begin{theorem}\label{th: Bohr+RH+LH}
There exists a system of Beurling primes $q=\{q_n\}_{n\geq 1}$ such that:
\begin{enumerate}[$(i)$]
	\item The Beurling zeta function $\zeta_q(s)$ has an analytic extension to $\Re s>1/2$, except for a simple pole at $s=1$. \label{analc}
	\item The Beurling zeta function has no zeros in the half-plane $\mathbb{C}_{\frac{1}{2}}=\{\Re s>1/2\}$.\label{RHZ}
	\item The prime counting function $\pi_q(x)$ satisfies 
		\begin{equation*}
			\pi_q(x)=\li (x)+O(1),
		\end{equation*}
		where $\li(x)=\int\limits_{1}^{x}(1-u^{-1})\left(\log u\right)^{-1}\,du$.\label{pnt}
	\item The integer counting function $N_{q}(x)$ satisfies
		\[
			N_{q}(x)  = ax + O_{\eps}(x^{1/2+\eps}), \quad \text{for all } \eps>0,
		\] for some $a>0$.\label{density}
	\item The corresponding Beurling integer system satisfies Bohr's condition. \label{bohrrh}
	\end{enumerate}
\end{theorem}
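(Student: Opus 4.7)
The plan is to combine the probabilistic construction of Beurling primes from \cite{BV21} with the perturbation technique of Theorem~\ref{primesae}. First I would invoke the random construction of \cite{BV21}, which produces (almost surely) a Beurling prime system $q = \{q_n\}_{n\geq 1}$ such that $\zeta_q$ extends analytically to $\{\Re s > 1/2\}$ apart from a simple pole at $s=1$, has no zeros there, and gives $\pi_q(x) = \li(x) + O(1)$ and $N_q(x) = ax + O_\eps(x^{1/2+\eps})$. Fixing a realization $q$ for which all of (i)--(iv) hold, observe that $\sigma_c(\zeta_q) \leq 1$, so Theorem~\ref{primesae} applies: for any prescribed $A > 0$, I obtain a Beurling prime system $\tilde q = \{\tilde q_n\}$ which satisfies Bohr's condition~\eqref{BC} and
\[
|q_n - \tilde q_n| \leq q_n^{-A}, \qquad n \in \mathbb{N}.
\]
Thus (v) comes for free, and the main task is to show that (i)--(iv) are preserved for $\tilde q$, provided $A$ is chosen sufficiently large.

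The central tool is the multiplicative factorization
\[
\zeta_{\tilde q}(s) = \zeta_q(s)\, G(s), \qquad G(s) = \prod_{n\geq 1} \frac{1 - q_n^{-s}}{1 - \tilde q_n^{-s}}.
\]
Using the elementary bound $|q_n^{-ks} - \tilde q_n^{-ks}| \leq k|s|\, q_n^{-k\Re s - 1}|q_n - \tilde q_n|$ in combination with $|q_n - \tilde q_n| \leq q_n^{-A}$, the series for $\log G(s)$ converges absolutely and locally uniformly in a half-plane $\{\Re s > 1/2 - \eta\}$ for some $\eta > 0$, provided $A$ exceeds a threshold determined by the abscissa of convergence of $\sum_n q_n^{-s}$. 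Consequently $G$ is analytic and nowhere vanishing there, so (i) and (ii) for $\tilde q$ transfer directly from the corresponding properties of $q$. Property (iii) is elementary: the difference $\pi_{\tilde q}(x) - \pi_q(x)$ is, at each $x$, the signed count of pairs $(q_n, \tilde q_n)$ straddling $x$, and since the perturbation radii $q_n^{-A}$ are summable and eventually separate distinct primes, the discrepancy is $O(1)$ uniformly in $x$, preserving (iii).

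For (iv), one derives $N_{\tilde q}(x) = \tilde a x + O_\eps(x^{1/2+\eps})$ with $\tilde a = a \cdot G(1)$ by a standard Perron-type argument applied to $\zeta_{\tilde q}(s)/s$: shift the contour past the simple pole at $s = 1$ into $\{\Re s > 1/2\}$ using the analytic continuation and zero-freeness established above, together with polynomial growth bounds along vertical lines that $\zeta_{\tilde q}$ inherits from $\zeta_q$ via the boundedness of $G$. The principal technical obstacle I anticipate is precisely this last point: calibrating the single parameter $A$ so that the convergence of $G$, its vertical-line bounds, and the resulting growth estimates on $\zeta_{\tilde q}$ are all strong enough to execute the contour shift yielding the $O_\eps(x^{1/2+\eps})$ error term. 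A secondary check is that the sequence $\tilde q$ delivered by Theorem~\ref{primesae} genuinely forms a Beurling prime system, i.e.\ that $\{\log \tilde q_n\}$ is $\mathbb{Q}$-linearly independent; this is already part of that theorem's almost-sure conclusion.
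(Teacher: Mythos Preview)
Your plan transfers properties (i)--(iii) correctly, but it breaks down at (iv). The claim that $G$ is \emph{bounded} on vertical lines is not supported by the very estimate you quote. From $|q_n^{-ks}-\tilde q_n^{-ks}|\le k|s|\,q_n^{-k\sigma-1}|q_n-\tilde q_n|$ together with $|q_n-\tilde q_n|\le q_n^{-A}$ one only gets
\[
|\log G(\sigma+it)| \;\le\; C(\sigma)\,|s|\sum_{n\ge1}q_n^{-1-A},
\]
which is linear in $|t|$; a more careful splitting according to whether $q_n\le |t|^{1/(A+1)}$ or not still leaves $|\log G(\sigma+it)|\ll |t|^{(1-\sigma)/(A+1)}$. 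Thus $G$, and hence $\zeta_{\tilde q}=\zeta_q G$, has growth of type $\exp(c|t|^{\delta})$ with $\delta>0$ on every line $\Re s=\sigma\in(1/2,1)$, and increasing $A$ only makes $\delta$ smaller, never zero. With such growth the Perron contour-shift cannot yield $N_{\tilde q}(x)=\tilde a x+O_\varepsilon(x^{1/2+\varepsilon})$: taking $T$ a power of $x$ makes the vertical and horizontal integrals blow up, while taking $T$ a power of $\log x$ ruins the truncation error. Nor does (iv) follow abstractly from (i)--(iii); in the Beurling setting, zero-freeness of $\zeta_q$ in $\Re s>1/2$ together with $\pi_q(x)=\li(x)+O(1)$ does \emph{not} force $N_q(x)=ax+O_\varepsilon(x^{1/2+\varepsilon})$ without an accompanying growth bound (this is precisely the subtlety addressed in \cite{DMV06,ZHA07,HL06,BV21}).

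The paper avoids this obstruction by not perturbing afterwards. Instead it inserts the Bohr separation requirement as an extra family of events $B_{k,j}$ into the probabilistic construction of \cite{BV21}, shows $\sum_{k,j}P(B_{k,j})<\infty$, and then, for an almost-sure realization, simply \emph{deletes} the finitely many offending primes. Deleting finitely many primes multiplies $\zeta_q$ by the finite product $\prod_{i}(1-q_{k_i}^{-s})$, which is bounded and zero-free in $\Re s>0$; this preserves the zero-order bound $\log\zeta_q(s)=\log\frac{s}{s-1}+O(\sqrt{\log|t|})$ needed for the Perron inversion giving (iv). The essential point is that a \emph{finite} modification of the prime system is harmless for the analytic estimates, whereas the infinite perturbation coming from Theorem~\ref{primesae} is not.
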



The proofs of our results investigate how well ``irrational numbers" may be approximated by fractions of Beurling integers. We will comment further on this kind of Diophantine approximation problems in Section 3.

In Section 3 we will also return to the original idea behind our work. There has been an interest in studying Hardy spaces of generalized Dirichlet series since the 60s \cite{DS19, HLS97, HEL65}. However, to our knowledge, except for examples that are very closely related to the ordinary integers, there has not been any discussion of the existence of Beurling primes $q$ satisfying the prime number theorem \eqref{pnt} such that Bohr's theorem holds true for the corresponding Hardy space $\mathcal{H}^\infty_q$. This is in spite of the fact that Bohr's theorem is crucial for a meaningful theory of the Hardy spaces $\mathcal{H}^p_q$, $1 \leq p \leq \infty$. 

Since other aspects of the function theory of Hardy spaces do not depend on the choice $q$ of Beurling primes, the motivation for Theorem~\ref{th: Bohr+RH+LH} was to find a canonical Beurling system $\mathbb{N}_q$ which allows us to assume the Riemann hypothesis in the $\mathcal{H}^p_q$-theory. As a specific function theoretic application, we construct an outer function, or synonymously, a cyclic function, $f \in \mathcal{H}^2_q$ which has a zero in its half-plane of convergence,
\begin{equation}\label{eq:fdef}
f(s) = \frac{1}{\zeta_q(s + 1/2 + \varepsilon)}, \qquad  0 < \varepsilon < 1/2.
\end{equation}
The existence of such an $f$ constitutes a counterexample to a conjecture posed by Helson \cite{HEL69}.
\subsection*{Notation}
Throughout the article, we will be using the convention that $C$ denotes a positive constant which may vary from line to line. We will write that $C = C(\Omega)$ when the constant depends on the parameter $\Omega$. 

\subsection*{Acknowledgments}
The authors thank Titus Hilberdink for helpful comments and discussions.

\section{Proof of the main results}

\begin{lemma}\label{distancebc}
Suppose that $\{q_n\}_{n\geq1}$ is a Beurling system such that $d_n := \nu_{n+1} - \nu_n \gg \nu_{n+1}^{-C}$. Then, for every $\varepsilon>0$ and for almost every $q' >1$, the Beurling system $\{q_n\}_{n\geq1}\cup \{q'\}$ has a distance function satisfying
\begin{equation}\label{distfin}
d'_n = \nu'_{n+1} - \nu'_n \gg \nu_{n+1}^{-C^{'}},\qquad n\in\mathbb{N},
\end{equation}
where $C'(q',\,q)=\max\left(C,\,2\sigma_c(\zeta_q)-1+\varepsilon\right)$.
\end{lemma}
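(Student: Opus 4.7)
The plan is to parametrize elements of $\mathbb{N}_{q \cup \{q'\}}$ uniquely as $\nu (q')^k$ with $\nu \in \mathbb{N}_q$ and $k \geq 0$, so that any two distinct such elements $\alpha = \nu_a (q')^j$ and $\beta = \nu_b (q')^{j+\ell}$ (WLOG $\ell \geq 0$) satisfy
\[
|\beta-\alpha| = (q')^j \bigl|\nu_b (q')^\ell - \nu_a\bigr|.
\]
When $\ell=0$ we have $a \neq b$, so WLOG $a<b$ and the gap is $(q')^j(\nu_b-\nu_a) \geq (q')^j d_a \gg (q')^j \nu_{a+1}^{-C}$ by hypothesis. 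Since $\nu_{a+1} \leq \nu_b \leq \beta$ and $q'>1$, this is $\gg \beta^{-C}$, so the $\ell=0$ case handles itself as soon as $C'\geq C$.

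The substantive case $\ell\geq 1$ is a Diophantine approximation problem: for almost every $q'$, the numbers $\nu_b (q')^\ell$ should not approximate $\nu_a$ too well. I fix $0<\eta<1$ and $Q<\infty$, restrict to $q' \in (1+\eta,Q)$, and apply Borel--Cantelli along a dyadic sequence of scales $X=2^N$. Let $B_X$ denote the set of $q'\in (1+\eta,Q)$ for which some quadruple $(a,b,j,\ell)$ with $\ell\geq 1$ and $\nu_b (q')^{j+\ell}\leq X$ satisfies $(q')^j|\nu_b(q')^\ell - \nu_a|<cX^{-C'}$. Showing $\sum_N \mu(B_{2^N})<\infty$ will yield, via Borel--Cantelli, that almost every $q'\in(1+\eta,Q)$ belongs to only finitely many $B_X$; the dyadic comparison $\beta\asymp X$ then produces \eqref{distfin} after adjusting the constant. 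A countable exhaustion $(\eta,Q)=(1/k,k)$ covers almost every $q'>1$.

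For the measure estimate, fix $(a,b,j,\ell)$ with $\ell\geq 1$. The bad $q'$ form an interval around $q_0=(\nu_a/\nu_b)^{1/\ell}$, and the derivative identity $\partial_{q'}[\nu_b(q')^\ell]\big|_{q_0} = \ell\nu_a/q_0$, together with the lower bound $q'\geq q_0/\text{const}$ on the bad set, yields a measure of $O\bigl(X^{-C'} q_0^{1-j}/(\ell\nu_a)\bigr)$. Summing $j\geq 0$ costs only the geometric factor $\sum_j q_0^{-j}=q_0/(q_0-1)=O_\eta(1)$. For fixed $(a,\ell)$, the constraint $q_0\in(1+\eta,Q)$ forces $\nu_b\in[\nu_a/Q^\ell,\,\nu_a/(1+\eta)^\ell]$, so at most $N_q(\nu_a)=O(\nu_a^{\sigma_c+\eps})$ values of $b$ are in play. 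Summing over $\nu_a\leq X$ by partial summation yields $\sum_{\nu_a\leq X}\nu_a^{\sigma_c-1+\eps}=O(X^{2\sigma_c-1+O(\eps)})$, with $\sigma_c=\sigma_c(\zeta_q)$. Finally $\sum_{1\leq \ell\leq \log_{q_1}X} 1/\ell$ contributes only a $\log\log X$ loss. Altogether,
\[
\mu(B_X) \ll_{Q,\eta,\eps} X^{-C'+2\sigma_c-1+O(\eps)},
\]
which is summable over dyadic $X$ whenever $C'>2\sigma_c-1+\eps$. Combined with $C'\geq C$ from the $\ell=0$ case, this delivers the exponent $C'=\max(C,\, 2\sigma_c(\zeta_q)-1+\eps)$ claimed in the statement.

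The main obstacle I anticipate is keeping the exponent in the measure estimate as sharp as $2\sigma_c-1$. It is crucial to exploit that, for fixed $(a,\ell)$, the admissible $b$'s lie in a narrow logarithmic window around $\nu_a/q_0^\ell$ (rather than ranging over all $\nu_b\leq X$, which would cost an extra factor of $X^{\sigma_c}$), and to use the derivative identity $\ell\nu_b q_0^{\ell-1}=\ell\nu_a/q_0$ so that one gains $\nu_a^{-1}$ from the small-interval estimate instead of merely $\nu_b^{-1}$. These two refinements together produce exactly the exponent $2\sigma_c-1$ in the bound for $\mu(B_X)$.
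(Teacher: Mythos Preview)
Your argument is correct (modulo a bit of bookkeeping: you should also discard the null set of $q'$ with $\log q'\in\spann_{\mathbb{Q}}\{\log q_n\}$, so that for the finitely many exceptional dyadic scales the relevant gaps are still strictly positive and hence bounded below by a constant). However, the paper reaches the same conclusion by a noticeably shorter route. Rather than running Borel--Cantelli over dyadic scales with threshold $X^{-C'}$, it runs it \emph{once} over all triples $(j,n,m)\in\mathbb{N}^3$ with the symmetric threshold $(\nu_n\nu_m)^{-C_0}$, $C_0=\sigma_c(\zeta_q)+\eps$: the bad set for a single triple is an interval of length $2x_0^{1-j}(\nu_n\nu_m)^{-C_0}$ about $(\nu_n/\nu_m)^{1/j}$, and the total measure is simply
\[
\sum_{j,n,m}2x_0^{1-j}(\nu_n\nu_m)^{-C_0}\ \ll\ \zeta_q(C_0)^2<\infty.
\]
Thus for a.e.\ $q'>x_0$ one has $\bigl|(q')^j-\nu_n/\nu_m\bigr|\gg(\nu_n\nu_m)^{-C_0}$ for \emph{all} triples, and the exponent $2C_0-1=2\sigma_c-1+O(\eps)$ drops out of the single line
\[
\nu'_{n+1}-\nu'_n=(q')^b\nu_m\Bigl|(q')^{a-b}-\tfrac{\nu_l}{\nu_m}\Bigr|\ \gg\ (q')^{b}\,\nu_m^{1-C_0}\nu_l^{-C_0}\ \ge\ (\nu'_{n+1})^{-(2C_0-1)}.
\]
This sidesteps your derivative estimate, the window-counting of admissible $b$, the partial summation over $\nu_a$, and the exhaustion in $Q$: the factorisation of the total measure as $\zeta_q(C_0)^2$ does all that work at once. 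Your route buys nothing extra in the exponent; it trades the paper's clean algebraic shortcut (made possible by choosing a threshold that decouples $n$ from $m$) for a more hands-on scale-by-scale analysis.
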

\begin{proof}
	Let $x_0 > 1$. First we will prove that the set $\mathcal{M}$ of all numbers $ q' \geq x_0$ such that there exist infinitely many triples $(j,n,m)\in\mathbb{N}^3$ with
$$\left|(q')^j-\frac{\nu_n}{\nu_m}\right|\leq \nu_n ^{-C_0}\nu_m ^{-C_0},\qquad C_0=\sigma(\zeta_q)+\varepsilon,$$
has measure zero. Since
\begin{equation*}
	\left|q'-\left(\frac{\nu_n}{\nu_m}\right)^{\frac{1}{j}}\right| \leq x_0^{1-j}\left|(q')^j-\frac{\nu_n}{\nu_m}\right|,
\end{equation*}
we have that $\mathcal{M}\subset \limsup_{m,n,j} \Omega_{m,n,j}$, where
$$\Omega_{m,n,j}=\left[\left(\frac{\nu_n}{\nu_m}\right)^{\frac{1}{j}}-x_0 ^{1-j}\nu_n^{-C_0}\nu_m^{-C_0},\, \left(\frac{\nu_n}{\nu_m}\right)^{\frac{1}{j}}+x_0 ^{1-j}\nu_n^{-C_0}\nu_m^{-C_0}\right],\qquad j, n, m \geq 1.$$
The Borel--Cantelli lemma thus shows that $|\mathcal{M}|=0$, since
\begin{equation*}
\sum\limits_{m\geq 1}\sum\limits_{n\geq 1}\sum\limits_{j\geq 1}|\Omega_{m,n,j}|\leq \frac{2x_{0}}{x_{0}-1}\zeta_q\left(C_0\right)^2<\infty.
\end{equation*}

Fix a number $q'\in[x_0,\infty)\setminus\mathcal{M}$ such that $\log q'$ is not in the (countable) set $\spann_{\mathbb{Q}}\{\log q_n\}$. Note that the set of such numbers has full measure in $[x_0,\infty)$, and that $x_0 > 1$ is arbitrary. 
By construction, there are finitely many triples $(j,n,m)$ such that
\begin{equation} \label{eq:exceptional1}
\left|(q')^j-\frac{\nu_n}{\nu_m}\right|\leq \nu_n ^{-C_0}\nu_m ^{-C_0}.
\end{equation}
For these exceptional triples, the left-hand side is at least positive, since $\log q'\notin \spann_{\mathbb{Q}}\{\log q_n\}$. Therefore
$$\left|(q')^j-\frac{\nu_n}{\nu_m}\right| \gg \nu_n ^{-C_0}\nu_m ^{-C_0}$$
for \textit{all} $(j,n,m) \in \mathbb{N}^3$.

Now we consider two arbitrary consecutive Beurling integers generated by the prime system $\{q_n\}_{n\geq1}\cup \{q'\}$, 
\begin{equation*}
\nu'_{n+1}=(q')^a\nu_m,\qquad \nu'_{n}=(q')^b\nu_l.
\end{equation*}
If $a = b$, then $l = m-1$ and 
\[\nu'_{n+1}-\nu'_n \gg \nu_m^{-C} \geq \left(\nu_{n+1}'\right)^{-C},\] by the hypothesis on the distances $d_n$ for the original Beurling system. Otherwise, if, say, $b < a$, then
\begin{equation*}
\left|\nu'_{n+1}-\nu'_n\right|= (q')^b\nu_m\left|(q')^{a-b}-\frac{\nu_l}{\nu_m}\right| \gg  \nu_l ^{-C_0}\nu_m ^{-C_0+1}(q')^b \gg \left(\nu_{n+1}'\right)^{-C^{'}}.
\end{equation*}
where $C^{'}=2\sigma_c(\zeta_q)-1+\varepsilon$.
\end{proof}
\begin{proof}[\textbf{Proof of Theorem~\ref{main}}]
The proof is a direct consequence of Lemma~\ref{distancebc}.
\end{proof}

In order to prove Bohr's theorem for more general Beurling systems, we need to control the constant in the distance estimate \eqref{distfin}, which comes from the exceptional triples satisfying \eqref{eq:exceptional1}.

\begin{proof}[\textbf{Proof of Theorem~\ref{primesae}}]
	Fix a small $\varepsilon > 0$ and $x_0\in (1+\varepsilon/2,1+\varepsilon)$. Consider first any Beurling system $\mathbb{N}_\rho = \{\nu_n\}_{n \geq 1}$ generated by Beurling primes such that $\rho_1 > 1 + \varepsilon$ and $\sigma_c(\zeta_\rho) < \infty$. For a number $\sigma_\infty > \max(2, A)$ to be chosen in a moment, let 
	$$\mathcal{N} =  \bigcup\limits_{m\geq2}\bigcup\limits_{n\geq 2}\bigcup\limits_{j\geq1} \Omega_{m, n, j},$$
    where $\Omega_{m, n, j}$ is defined as in the proof of Lemma~\ref{distancebc},
    	$$\Omega_{m,n, j}=\left[\left(\frac{\nu_n}{\nu_m}\right)^{\frac{1}{j}}-x_0 ^{1-j}\nu_n^{-\sigma_\infty}\nu_m^{-\sigma_\infty},\, \left(\frac{\nu_n}{\nu_m}\right)^{\frac{1}{j}}+x_0 ^{1-j}\nu_n^{-\sigma_\infty}\nu_m^{-\sigma_\infty}\right].$$
      Then
    $ |\mathcal{N}| \leq C(\varepsilon)\left(\zeta_{\rho}(\sigma_\infty)-1\right)^2.$ 
    Furthermore, for $x > 2$, let  
    $$I_x=[x-x^{-\frac{\sigma_\infty}{2}},x+x^{-\frac{\sigma_\infty}{2}}].$$
    Note that if $\sigma_\infty$ is sufficiently large, $\sigma_\infty \geq C(\varepsilon)$, then $I_x \cap \Omega_{m,n, j} \neq \emptyset$ only if $\nu_n \geq x/2$. Therefore
    
	$$\left|I_x\cap\mathcal{N}\right| \leq \sum\limits_{\substack{m \geq 2 \\j\geq1\\\nu_n\geq \frac{x}{2}}} |\Omega_{m,n,j}|\leq C(\varepsilon)\left(\zeta_{\rho}(\sigma_\infty)-1\right)\sum\limits_{\nu_n\geq\frac{x}{2}}\nu_n^{-\sigma_\infty}\leq C(\varepsilon)\left(\zeta_{\rho}(\sigma_\infty)-1\right) \zeta_{\rho}\left(\frac{\sigma_\infty}{4}\right)  x^{-\frac{3\sigma_\infty}{4}}.$$
		We will construct a sequence of Beurling systems such that
	\begin{equation} \label{eq:bddquant}
	\left(\zeta_{\rho}(\sigma_\infty)-1\right) \zeta_{\rho}\left(\frac{\sigma_\infty}{4}\right) \leq 1
    \end{equation}
   for the number $\sigma_\infty > 0$, still to be chosen later. Therefore
    \begin{equation} \label{eq:existenceineq}
    \left|I_x\cap\mathcal{N}\right| \leq C(\varepsilon) x^{-\frac{\sigma_\infty}{4}} |I_x|,
    \end{equation}
    We conclude that whenever $x$ is sufficiently large, $I_x \not \subset \mathcal{N}$.
	
	To include triples where $\nu_n$ or $\nu_m$ equals one in our considerations, we increase the power $\sigma_\infty$. The inequality
	\begin{equation}\label{eq:powerincr}
	\left|x^j-\frac{\nu_n}{\nu_m}\right|\leq \nu_n^{-3\sigma_\infty} \nu_m^{-3\sigma_\infty}
	\end{equation}
	implies, whenever $x \geq x_0$, that
	$$\left|x - \left(\frac{\nu_n}{\nu_m}\right)^{\frac{1}{j}}\right| \leq x_0^{1-j}\left|x^j-\frac{\nu_n^2\nu_m }{\nu_m^2\nu_n}\right|\leq x_0^{1-j} \left(\nu_m^2\nu_n\right)^{-\sigma_\infty}\left(\nu_n^2\nu_m\right)^{-\sigma_\infty}.$$
	Therefore $\mathcal{M} \subset \mathcal{N}$, where $\mathcal{M}$ this time denotes the set of all $x \geq x_0$ for which there exists an exceptional triple $ (j,n,m) \in \mathbb{N}^3$ such that \eqref{eq:powerincr} holds.
	
	Now let $q$ be a sequence of primes in the statement of Theorem~\ref{primesae}, assuming that $\varepsilon < q_1 - 1$. As described, we will only be able to effectively apply \eqref{eq:existenceineq} when $x$ is sufficiently large, say, $x \geq B = B(\varepsilon) = C(\varepsilon)^4 + 2$, where $C(\varepsilon)$ in this instance refers to the same constant as in \eqref{eq:existenceineq}.  Let $N$ be such that $\{q_1, \ldots, q_N\} = (1, B) \cap q$.   Then, as a corollary of Theorem~\ref{main}, we already know that there exists an increasing finite sequence of primes $\{\tilde{q}_1, \ldots \tilde{q}_N\}$, $\tilde{q}_1 > 1$, such that $|q_j - \tilde{q}_j| \leq q_j^{-A}$, $j=1, \ldots, N$, and such that Bohr's condition holds for $\{\nu_n^{(N)}\}_{n\geq1}=\mathbb{N}_{\{\tilde{q}_1, \ldots \tilde{q}_N\}}$.
	 Further, we choose $\sigma_\infty$ so large that
	 		\begin{equation*}
	 	\left|\nu_{n+1}^{(N)}-\nu_n^{(N)}\right|\geq \left(\nu_{n+1}^{(N)}\right)^{-6\sigma_\infty},\qquad n\in\mathbb{N},
	 \end{equation*}
 and
	\begin{equation}\label{eq:sigmainfchoice}
	\left(\zeta_{q'}(\sigma_\infty)-1\right) \zeta_{q'}\left(\frac{\sigma_\infty}{4}\right) \leq 1, \quad q' = \{\tilde{q}_1, \ldots \tilde{q}_N, q_{N+1} - 1, q_{N+2} - 1, q_{N+3} - 1, \ldots\}.
	\end{equation}
	This is made possible by the hypothesis that $\sigma_c(\zeta_q) < \infty$, since
		\begin{equation*}
		\zeta_{q'}(\sigma)\leq \prod\limits_{j\geq 1}\frac{1}{1-(q_j')^{-\sigma}}\leq \zeta_q\left(\frac{\sigma}{C}\right),\qquad \sigma>0,\qquad C\geq \sup_{n \geq 1}\frac{\log (q_n)}{\log(q'_n)}.
	\end{equation*}
	
	From here we proceed by induction. Suppose that $\tilde{q}_1, \ldots \tilde{q}_k$ have been chosen, where $k \geq N$, with corresponding Beurling integers $\{\nu_n^{(k)}\}_{n\geq1}=\mathbb{N}_{\{\tilde{q}_{n}\}_{n=1}^k}$ satisfying  that	$$\left|\nu_{n+1}^{(k)}-\nu_n^{(k)}\right|\geq \left(\nu_{n+1}^{(k)}\right)^{-6\sigma_\infty}.$$
	 We apply the preceding discussion to the Beurling primes $\rho = \{\tilde{q}_1, \ldots \tilde{q}_k\}$ and $x = q_{k+1}$, concluding that there exists a number $\tilde{q}_{k+1} \in I_{q_{k+1}}$ such that
	$$\left|\tilde{q}_{k+1}^j-\frac{\nu^{(k)}_n}{\nu^{(k)}_m}\right| \geq \left(\nu_n^{(k)}\right)^{-3\sigma_\infty} \left(\nu_m^{(k)}\right)^{-3\sigma_\infty}, \qquad (j, n, m) \in \mathbb{N}^3.$$
	By the same argument as in the last paragraph of the proof of Theorem~\ref{main} the Beurling system $\{\nu_n^{(k+1)}\}_{n\geq1}=\mathbb{N}_{\{\tilde{q}_n\}_{n=1}^{k+1}}$, then satisfies that
	\begin{equation*}
		\left|\nu_{n+1}^{(k+1)}-\nu_n^{(k+1)}\right|\geq \left(\nu_{n+1}^{(k+1)}\right)^{-6\sigma_\infty},\qquad n\in\mathbb{N}.
	\end{equation*}
	
	At each step of the construction, \eqref{eq:sigmainfchoice} ensures that \eqref{eq:bddquant} holds.
	We hence obtain a sequence $\tilde{q}=\{\tilde{q}_n\}_{n\geq 1}$, satisfying that $|\tilde{q}_n-q_n|\leq q_n^{-\frac{\sigma_\infty}{2}}$ as well as Bohr's condition \eqref{BC}, specifically,
	\begin{equation*}
	\left|\tilde{\nu}_{n+1}-\tilde{\nu}_{n}\right|\geq\left(\tilde{\nu}_{n+1}\right)^{-6\sigma_\infty},\qquad n\in\mathbb{N}.
	\end{equation*}
where $\{\tilde{\nu}_n\}_{n\geq1}=\mathbb{N}_{\tilde{q}}$.
\end{proof}

To prove Theorem \ref{th: Bohr+RH+LH}, we shall combine the proof of Theorem~\ref{primesae} with the probabilistic construction of \cite[Theorem 1.2]{BV21}.  Let
\[
F(x) = \li(x) = \int\limits_{1}^{x}\frac{1-u^{-1}}{\log u}\,du
\]
and set $x_{n} = F^{-1}(n)$. We select the $n$th Beurling prime $q_{n}$ randomly from the interval $[x_{n},x_{n+1}]$ according to the probability measure $d\li(x)\rvert_{[x_{n},x_{n+1}]}$. That is, we consider a sequence of independent random variables $Q_{n}$, representing the coordinate functions $(q_{1}, q_{2}, \dotsc) \mapsto q_{n}$, with cumulative distribution function $\int\limits_{x_{n}}^{x}d\li(u) = \li(x) - n$, $x_{n} \le x \le x_{n+1}$. Formally, the probability space is $X=\prod_{n=1}^{\infty}[x_{n},x_{n+1}]$, and by appealing to Kolmogorov's extension theorem, we can equip $X$ with a probability measure $dP$ such that 
\[
P\biggl(A \times \prod_{n=k+1}^{\infty}[x_{n},x_{n+1}]\biggr) = \int_{A}d\li(u_{1})\dotsm d\li(u_{k}), \quad \text{if } A \subseteq [x_{1},x_{2}]\times \dotsb \times [x_{k},x_{k+1}].
\] 

\begin{proof}[\textbf{Proof of Theorem~\ref{th: Bohr+RH+LH}}]
	Let $A>1$. We will show the existence of a sequence of Beurling primes $q=\{q_{n}\}_{n\ge1}$ generating integers $\mathbb{N}_{q} = \{\nu_{n}\}_{n\ge1}$ with the following properties:
	\begin{enumerate}[(a)]
		\item The Beurling zeta function $\zeta_{q}(s)$ can be written as
		\[
		\zeta_{q}(s) = \frac{se^{Z(s)}}{s-1},
		\]
		where $Z(s)$ is an analytic function in $\{\Re s > 1/2\}$ which in every closed half-plane $\{\Re s \ge \sigma_0\}$, $\sigma_{0}>1/2$, satisfies
		\[
		|Z(s)| \ll_{\sigma_{0}} \sqrt{\log(|t|+2)}, \qquad s = \sigma + it.
		\]\label{Z(s)}
		\item The Beurling integers satisfy $|\nu_{n}-\nu_{m}| \ge (\nu_{n}\nu_{m})^{-A}$ whenever $n\neq m$.
	\end{enumerate}
	
	Suppose that $\nu, \mu$ are relatively prime Beurling integers, $(\nu, \mu) = 1$, satisfying $|\nu-\mu| < (\nu\mu)^{-A}$. Let $q_{k}$ be the largest prime factor of $\nu\mu$. Without loss of generality $q_{k} \mid \nu$ and $q_{k}\nmid \mu$, and let $j$ be such that $\nu = q_{k}^{j}\nu'$ where $q_{k} \nmid \nu'$. Then
	\[
	\left\lvert q_{k}^{j} - \frac{\mu}{\nu'}\right\rvert < \frac{1}{q_{k}^{Aj}(\nu')^{1+A}\mu^{A}}, 
	\]
	so that
	\[
	\left\lvert q_{k} - \Bigl(\frac{\mu}{\nu'}\Bigr)^{1/j}\right\rvert < \frac{1}{q_{k}^{(A+1)j-1}(\nu')^{1+A}\mu^{A}} \le \frac{1}{x_{k}^{(A+1)j-1}(\nu')^{1+A}\mu^{A}}.
	\]
	
	This motivates the following definitions. For $k, j \ge1$ and 
	\[
	q_{1}\in [x_{1}, x_{2}], q_{2}\in [x_{2},x_{3}], \dotsc, q_{k-1}\in [x_{k-1},x_{k}],\]
	 we set
	\begin{multline*}
	\mathcal{M}_{k,j}(q_{1}, \dotsc, q_{k-1}) \\
	= \bigcup_{\nu, \mu\in \mathbb{N}_{(q_{1},\dotsc, q_{k-1})}}\left[\Bigl(\frac{\mu}{\nu}\Bigr)^{1/j}-\frac{1}{x_{k}^{(A+1)j-1}\nu^{1+A}\mu^{A}},\Bigl(\frac{\mu}{\nu}\Bigr)^{1/j} + \frac{1}{x_{k}^{(A+1)j-1}\nu^{1+A}\mu^{A}}\right],
	\end{multline*}
	and we consider the events
	\[
	B_{k,j} = \bigl\{(q_{1},q_{2},\dotsc) : q_{k}\in \mathcal{M}_{k,j}(q_{1}, \dotsc, q_{k-1})\bigr\}.
	\]
	Denoting $\mathcal{X} = (x_{1}, x_{2},\dotsc)$, the Lebesgue measure of $\mathcal{M}_{k,j}(q_{1}, \dotsc, q_{k-1})$ is bounded by $2\zeta_{\mathcal{X}}(1+A)\zeta_{\mathcal{X}}(A)x_{k}^{1-(A+1)j}$. Note that $\pi_{\mathcal{X}}(x) = \li(x)+O(1)$, and so  $\zeta_{\mathcal{X}}(s)$ has abscissa of convergence $1$. Hence, for the probability of $B_{k,j}$ we have
	\begin{align*}
	P(B_{k,j}) 	&= \int\limits_{x_{1}}^{x_{2}}d\li(u_{1})\int\limits_{x_{2}}^{x_{3}}d\li(u_{2})\dotso\int\limits_{x_{k-1}}^{x_{k}}d\li(u_{k-1})\int\limits_{[x_{k},x_{k+1}] \cap \mathcal{M}_{k,j}(u_{1}, \dotsc, u_{k-1})} d\li(u_{k}) \\
	&\le \frac{2\zeta_{\mathcal{X}}(1+A)\zeta_{\mathcal{X}}(A)}{x_{k}^{(A+1)j-1}}.
	\end{align*}
	In particular, $\sum_{k=1}^{\infty}\sum_{j=1}^{\infty}P(B_{k,j}) < \infty$. 
	
	We also consider the events
	\[
	A_{k,m} = \biggl\{(q_{1}, q_{2}, \dotsc): \Bigl\lvert\sum_{n=1}^{k}q_{n}^{-i m}-\int\limits_{x_{1}}^{x_{k}}u^{-im}d\li(u)\Bigr\rvert \ge 8\sqrt{\frac{x_{k}}{\log x_{k}}}\bigl(\sqrt{\log x_{k}}+\sqrt{\log m}\bigr)\biggr\}.
	\]
	In the proof of \cite[Theorem 1.2]{BV21} it was shown that also $\sum_{k=1}^{\infty}\sum_{m=1}^{\infty}P(A_{k,m}) < \infty$. Hence, by the Borel--Cantelli lemma we have with probability $1$ that the sequence $(q_{1}, q_{2}, \dotsc)$ is contained in only finitely many of the sets $A_{k,m}$ and $B_{k,j}$. Take any such sequence $q$.  
	
	Note that by construction, $\pi_{q}(x) = \li(x)+O(1)$. As $q$ is contained in only finitely many $A_{k, m}$, we obtain that 
	\begin{equation}
	\label{exp sum}
	\left\lvert\sum_{q_{n}\le x}q_{n}^{-it}-\int\limits_{x_{1}}^{x}u^{-it}\,d\li(u)\right\rvert \ll \sqrt{\frac{x}{\log(x+1)}}\bigl(\sqrt{\log(x+1)}+\sqrt{\log(|t|+1)}\bigr), \quad x\ge1, \quad t\in\mathbb{R}.
	\end{equation}
	For $x=x_{k}$ and $t=m\in \mathbb{Z}$ this is clear. If $x\in (x_{k}, x_{k+1})$, then both terms in the absolute value of \eqref{exp sum} change by at most $O(1)$ upon replacing $x$ by $x_{k}$. Hence the bound also holds for any $t = m\in \mathbb{Z}$ and arbitrary $x\ge1$. To obtain the bound for $t\in(m,m+1)$, we write
	\[
	\sum_{q_{n}\le x}q_{n}^{-it} = \int\limits_{x_{1}}^{x}u^{-i(t-m)}\,d\biggl(\sum_{q_{n}\le u}q_{n}^{-im}\biggr), \quad 
	\int\limits_{x_{1}}^{x}u^{-it}\,d\li(u) = \int\limits_{x_{1}}^{x}u^{-i(t-m)}\,d\biggl(\int\limits_{x_{1}}^{u}v^{-im}\,d\li(v)\biggr),
	\]
and integrate by parts.
	
	The bound \eqref{exp sum} implies (\ref{Z(s)}). Indeed, setting $\Pi_{q}(x) = \pi_{q}(x) + \frac{\pi_{q}(x^{1/2})}{2}+\dotsb$, we have 
	\[
	\log\zeta_{q}(s) = \int\limits_{1}^{\infty}x^{-s}\,d\Pi_{q}(x) = \log\frac{s}{s-1} + \int\limits_{1}^{\infty} x^{-s}\,d\bigl(\pi_{q}(x)-\li(x)\bigr) + \int\limits_{1}^{\infty}x^{-s}\,d\bigl(\Pi_{q}(x)-\pi_{q}(x)\bigr).
	\]
	Let $\sigma\ge\sigma_{0}>1/2$. In the second integral we integrate by parts and use \eqref{exp sum} to see that it is $O_{\sigma_{0}}\bigl(\sqrt{\log(|t|+2)}\bigr)$. The third integral is $O_{\sigma_{0}}(1)$ for $\sigma\ge\sigma_{0}>1/2$, since $\Pi_{q}(x)-\pi_{q}(x)$ is non-decreasing and $\ll \sqrt{x}/\log x$.
	
	Now let $k_{1}, k_{2}, \dotsc, k_{l}$ be the exceptional integers of the construction. Then $q\notin B_{k,j}$ for all $j$ and $k\neq k_{1}, \dotsc, k_{l}$. We simply remove the corresponding Beurling primes from the system: $\tilde{q} = q \setminus\{q_{k_{1}}, \dotsc, q_{k_{l}}\}$. As $\zeta_{\tilde{q}}(s) = \zeta(s)(1-q_{k_{1}}^{-s})\dotsm(1-q_{k_{l}}^{-s})$, (\ref{Z(s)}) remains valid for $\zeta_{\tilde{q}}(s)$. Finally, every two distinct Beurling integers $\nu_{n}\neq\nu_{m}$ from $\mathbb{N}_{\tilde{q}}$ satisfy $|\nu_{n}-\nu_{m}| \ge (\nu_{n}\nu_{m})^{-A}$. For if this were not the case, then, by the argument at the beginning of the proof, the largest prime factor $q_{k}$ of $\nu_{n}\nu_{m}/(\nu_{n}, \nu_{m})^{2}$ would be contained in some $\mathcal{M}_{k,j}(q_{1}, \dotsc, q_{k-1})$, which is impossible by construction.
	
	Every point in Theorem \ref{th: Bohr+RH+LH} has now been proven, except for \textit{(\ref{density})}. However, since $\zeta_{\tilde{q}}(s)$ is of zero order in $\{\Re s > \sigma_0\}$ for every $\sigma_0 > 1/2$ by (\ref{Z(s)}), this follows from a standard application of Perron inversion, see \cite{HL06,ZHA07}.
\end{proof}

\section{Further discussion}
\subsection*{Diophantine approximation and Beurling integers}
Using the Borel--Cantelli theorem to study the irrationality of real numbers is a standard technique of Diophantine approximation. The irrationality measure $\mu(x)$ of a real number $x\in\mathbb{R}$ is defined as the infimum of the set
$$R_x=\left\{r>0:\left|x-\frac{m}{n}\right|<\frac{1}{n^r} \text{ for at most finitely many pairs }(m,n)\in\mathbb{N}\times\mathbb{N}\right\}.$$
For a Beurling system $\mathbb{N}_{q}=\{\nu_n\}_{n\geq 1}$, we may also introduce the irrationality measure $\mu_q(x)$ of a real number $x\in\mathbb{R}$ as the infimum of the set
$$R_x=\left\{r>0:\left|x-\frac{\nu_m}{\nu_n}\right|<\frac{1}{\nu_n ^r} \text{ for at most finitely many pairs }(m,n)\in\mathbb{N}\times\mathbb{N}\right\}.$$
Then, by slightly modifying the proof of Lemma~\ref{distancebc}, we obtain the following proposition.
\begin{proposition}\label{paok}
	Let $q=\{q_n\}_{n\geq1}$ be a sequence of Beurling primes with $\sigma_c(\zeta_q)< \infty$. Then, for almost every $x\in\mathbb{R}$, it holds that
	\begin{equation*}\label{eq:paok}
	\mu_q(x)\leq 2\sigma_c(\zeta_q).
	\end{equation*}
\end{proposition}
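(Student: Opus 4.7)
The plan is to adapt the Borel--Cantelli argument in the proof of Lemma~\ref{distancebc}, replacing the target $(q')^j$ by an arbitrary real number $x$. Since we only approximate by ratios $\nu_m/\nu_n$ and no $j$-th root appears, the setup is in fact simpler: there is no need to sum an additional geometric series in $j$, and the exceptional set will be genuinely Lebesgue-null rather than merely avoided by a positive-measure choice.

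First I would localize to a compact window $[-K,K]$, fix a small $\varepsilon>0$, and set $r=2\sigma_c(\zeta_q)+\varepsilon$ together with $C_0=\sigma_c(\zeta_q)+\varepsilon/2$, so that $r=2C_0$. For each pair $(m,n)$ with $m,n\geq 2$ I would consider the interval
\[
\Omega_{m,n}=\left[\frac{\nu_m}{\nu_n}-\nu_n^{-r},\;\frac{\nu_m}{\nu_n}+\nu_n^{-r}\right],
\]
and note that $\Omega_{m,n}\cap[-K,K]\neq\emptyset$ forces $\nu_m\leq(K+1)\nu_n$ once $n$ is large. The key move, mirroring the symmetric splitting $\nu_n^{-C_0}\nu_m^{-C_0}$ in Lemma~\ref{distancebc}, is to write $\nu_n^{-r}=\nu_n^{-C_0}\cdot\nu_n^{-C_0}$ and then use $\nu_n\geq \nu_m/(K+1)$ to trade one factor of $\nu_n^{-C_0}$ for $(K+1)^{C_0}\nu_m^{-C_0}$, giving
\[
\nu_n^{-r}\ll_{K}\nu_n^{-C_0}\nu_m^{-C_0}.
\]

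Summing then yields
\[
\sum_{m,n\geq2}\bigl|\Omega_{m,n}\cap[-K,K]\bigr|\ll_{K}\zeta_q(C_0)^2<\infty,
\]
because $C_0>\sigma_c(\zeta_q)$. The Borel--Cantelli lemma then shows that the set of $x\in[-K,K]$ lying in infinitely many such $\Omega_{m,n}$ has Lebesgue measure zero. The boundary cases $m=1$ or $n=1$ contribute only finitely many pairs for any fixed $x\neq 0$ (in the first case $\nu_m/\nu_n=1/\nu_n\to0$, in the second the intervals are centered at Beurling integers tending to infinity and have length $2$), so they can be absorbed at no cost. Taking a countable union over $K\in\mathbb{N}$ and over a sequence $\varepsilon_j\downarrow0$ then gives $\mu_q(x)\leq 2\sigma_c(\zeta_q)$ for a.e.\ $x\in\mathbb{R}$. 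I do not anticipate any real obstacle: the only input beyond the Borel--Cantelli template of Lemma~\ref{distancebc} is the localization to a bounded window, which supplies the missing factor of $\nu_m^{-C_0}$ needed to invoke the summability of $\zeta_q(C_0)$.
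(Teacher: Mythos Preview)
Your proposal is correct and follows essentially the same Borel--Cantelli approach that the paper indicates (it only says the result follows ``by slightly modifying the proof of Lemma~\ref{distancebc}''). The localization to $[-K,K]$ that you introduce is exactly the modification needed to pass from the interval half-length $\nu_n^{-r}$ dictated by the definition of $\mu_q$ to a summable bound of the form $\nu_n^{-C_0}\nu_m^{-C_0}$; your handling of the boundary cases $m=1$, $n=1$ and the final union over $K$ and $\varepsilon_j\downarrow 0$ are routine and correct.
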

In the classical case, Dirichlet's approximation theorem therefore implies that  $\mu(x)=2$ for almost every $x\in \mathbb{R}$. We also recall Roth's theorem \cite{DR55}, which states that $\mu(x)=2$ for every algebraic irrational number. It would be very interesting to develop corresponding results in the context of Beurling integers.

\subsection*{Hardy spaces of Dirichlet series and a conjecture of Helson}
For a sequence $q$ of Beurling primes, we introduce the Hardy space $\mathcal{H}_q^2$ as
$$\mathcal{H}_q^2=\left\{f(s)=\sum_{n\geq1} a_n \nu_n^{-s} \, :\, \norm{f}_{\mathcal{H}_q^2}^2=\sum_{n\geq1}|a_n|^2< \infty \right\}.$$
More generally, for $1 \leq p < \infty$, we define $\mathcal{H}_q^p$ as the completion of polynomials (finite sums $\sum a_n \nu_n^{-s}$) under the Besicovitch norm 
\begin{equation*}
	\norm{P}_{\mathcal{H}_q^p}:= \left(\lim\limits_{T\rightarrow\infty} \frac{1}{2T} \int\limits_{-T}^{T} |P(it)|^p\,dt\right)^{\frac{1}{p}}.
\end{equation*}
The function theory of these spaces originated with Helson \cite{HEL65}, and was, in the distuingished case where $q$ is the sequence of ordinary primes, continued in very influential papers of Bayart \cite{BAY02} and Hedenmalm, Lindqvist, and Seip \cite{HLS97}. More generally, there is a developing theory of Hardy spaces of Dirichlet series $\sum a_n e^{-\lambda_n s}$ whose frequencies are related to other groups than $\mathbb{T}^\infty$, but we shall restrict our attention to frequencies given by Beurling primes. A cornerstone of the theory is that there is a natural multiplicative linear isometric isomorphism between $\mathcal{H}^p_q$ and the Hardy space $H^p(\mathbb{T}^\infty)$ of the infinite torus \cite{DS19, HEL69}.  However, more is needed in order to identify $H^\infty(\mathbb{T}^\infty)$ with $\mathcal{H}^\infty_q$, the space of Dirichlet series $\sum a_n \nu_n^{-s}$ which converge to a bounded function in $\mathbb{C}_0 = \{\Re s > 0\}$. In fact, Bohr's condition is typically used in order to establish this isomorphism \cite{SC20}.   

In identifying $\mathcal{H}^p_q$ with $H^p(\mathbb{T}^\infty)$ one is naturally led to consider twisted Dirichlet series
$$f_\chi(s) = \sum_{n\geq1} a_n \chi(\nu_n) \nu_n^{-s},$$
where a point $\chi \in \mathbb{T}^\infty$ is interpreted as the completely multiplicative character $\chi \colon \mathbb{N}_q \to \mathbb{T}$ such that $\chi(q_n) = \chi_n$. Helson \cite{HEL69} proved that if $f \in \mathcal{H}^2_q$ and the associated frequencies satisfy Bohr's condition, then $f_\chi(s)$ converges in $\mathbb{C}_0$ for almost every $\chi \in \mathbb{T}^\infty$. Helson went on to make a conjecture, which we state only in the special case that the frequencies correspond to a Beurling system.
Recall that $f \in \mathcal{H}^2_q$ is said to be outer (or cyclic) if $\left\{fg:\, g\in \mathcal{H}_q^\infty\right\}$ is dense in $\mathcal{H}_q^2$.
\begin{conjecture}
	If $\mathbb{N}_q$ is a Beurling system that satisfies Bohr's condition and $f$ is outer in $\mathcal{H}_q^2$, 
	then $f_\chi$ never has any zeros in its half-plane of convergence.
\end{conjecture}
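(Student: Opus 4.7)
The plan is to \emph{disprove} the conjecture by constructing an explicit counterexample. Take $q$ to be the Beurling system produced by Theorem~\ref{th: Bohr+RH+LH}, which enjoys Bohr's condition, the Riemann hypothesis, and a Lindel\"of-type bound on $\zeta_{q}$. As candidate, take
\[
f(s)=\frac{1}{\zeta_{q}(s+1/2+\varepsilon)},\qquad 0<\varepsilon<1/2,
\]
from \eqref{eq:fdef}, evaluated at the trivial character $\chi\equiv1$, so $f_{\chi}=f$. Three properties must be checked: $f\in\mathcal{H}^{2}_{q}$; $f$ has a zero in its half-plane of convergence; and $f$ is outer in $\mathcal{H}^{2}_{q}$.

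Both of the first two properties should be short. Expanding the Euler product yields $f(s)=\sum_{n\ge1}\mu_{q}(n)\nu_{n}^{-s-1/2-\varepsilon}$ with Beurling--M\"obius coefficients $\mu_{q}(n)\in\{-1,0,1\}$, so
\[
\|f\|_{\mathcal{H}^{2}_{q}}^{2}\le\sum_{n\ge1}\nu_{n}^{-1-2\varepsilon}=\zeta_{q}(1+2\varepsilon)<\infty
\]
by Theorem~\ref{th: Bohr+RH+LH}. The candidate zero $s=1/2-\varepsilon$ is inherited from the simple pole of $\zeta_{q}$ at $s=1$. Using the representation $\zeta_{q}(s)=s e^{Z(s)}/(s-1)$ with $Z(s)=O(\sqrt{\log(|t|+2)})$, together with the zero-free region in $\{\Re s>1/2\}$, both $\zeta_{q}$ and $1/\zeta_{q}$ are of zero order in every half-plane $\{\Re s\ge\sigma_{0}\}$, $\sigma_{0}>1/2$. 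A standard truncated Perron inversion then gives $M_{q}(x):=\sum_{\nu_{n}\le x}\mu_{q}(n)=O_{\delta}(x^{1/2+\delta})$ for every $\delta>0$, and partial summation yields $\sigma_{c}(f)\le-\varepsilon$, placing the zero in the open half-plane of convergence.

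The heart of the argument — and the main obstacle — is outerness of $f$ in $\mathcal{H}^{2}_{q}$. Via the Bohr lift, $f$ becomes $F(\chi)=\prod_{n\ge1}(1-q_{n}^{-1/2-\varepsilon}\chi_{n})$, and each factor is outer in $H^{2}(\mathbb{T})$ since $q_{n}^{-1/2-\varepsilon}<1$. Each finite truncation $F_{N}=\prod_{n\le N}(1-q_{n}^{-1/2-\varepsilon}\chi_{n})$ is a tensor product of one-variable outer functions and hence outer in $H^{2}(\mathbb{T}^{N})\subset H^{2}(\mathbb{T}^{\infty})$. Writing $F=F_{N}G_{N}$ with $G_{N}=\prod_{n>N}(1-q_{n}^{-1/2-\varepsilon}\chi_{n})$, the tail obeys
\[
\|G_{N}-1\|_{H^{2}(\mathbb{T}^{\infty})}^{2}\le\prod_{n>N}\bigl(1-q_{n}^{-1-2\varepsilon}\bigr)^{-1}-1\longrightarrow 0.
\]
The plan is then to approximate any $g\in\mathcal{H}^{2}_{q}$ first by a Dirichlet polynomial $g_{N}$ using only the first $N$ primes, express $g_{N}$ as an $H^{2}$-limit of polynomial multiples of $F_{N}$ (possible by outerness of $F_{N}$), and absorb the residual using the tail estimate.

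The technical hurdle is that since $\sum q_{n}^{-1/2-\varepsilon}$ diverges, $G_{N}$ is not expected to lie in $\mathcal{H}^{\infty}_{q}$, so polynomial multipliers cannot simply be passed through the factorisation $F=F_{N}G_{N}$ by bounded multiplication. I would sidestep this by exploiting $L^{2}$-orthogonality on $\mathbb{T}^{\infty}$: elements of $H^{2}(\mathbb{T}^{\infty})$ supported on the first $N$ variables are orthogonal to any purely tail-supported perturbation, so the error in replacing $F_{N}$-multiples by $F$-multiples can be controlled directly by the $H^{2}$-norm of $G_{N}-1$ rather than by any $L^{\infty}$-bound on $G_{N}$ itself.
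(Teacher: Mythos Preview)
Your proposal is correct and follows the paper's blueprint for membership in $\mathcal{H}^{2}_{q}$, the location of the zero, and convergence of $1/\zeta_{q}$ via Perron inversion. The one genuine divergence is in the outerness argument. The paper does not go through the Bohr lift at all: it simply observes that $f, f^{2}, 1/f, 1/f^{2}\in\mathcal{H}^{2}_{q}$ (a coefficient computation), so that $f,1/f\in\mathcal{H}^{4}_{q}$; taking polynomials $p_{n}\to 1/f$ in $\mathcal{H}^{4}_{q}$ and applying Cauchy--Schwarz gives $\|1-p_{n}f\|_{\mathcal{H}^{2}_{q}}\le\|f\|_{\mathcal{H}^{4}_{q}}\|1/f-p_{n}\|_{\mathcal{H}^{4}_{q}}\to0$. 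This is shorter and avoids any limiting argument on $\mathbb{T}^{\infty}$, at the cost of needing the extra input $1/f\in\mathcal{H}^{4}_{q}$.

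Your route via $F=F_{N}G_{N}$ also works, and in fact is more elementary than you present it: since $|q_{n}^{-1/2-\varepsilon}|<1$, each factor $1-q_{n}^{-1/2-\varepsilon}\chi_{n}$ is invertible in $H^{\infty}(\mathbb{T})$, so $F_{N}$ is invertible in $H^{\infty}(\mathbb{T}^{N})$ and hence trivially cyclic---no need to invoke tensor products of outer functions. Your ``orthogonality'' remark is really the independence identity $\|h\,k\|_{2}=\|h\|_{2}\|k\|_{2}$ for $h$ depending on the first $N$ variables and $k$ on the rest; with this, if $p$ is a polynomial in the first $N$ variables with $\|pF_{N}-g_{N}\|_{2}<\delta$, then $\|pF-g_{N}\|_{2}\le\|pF_{N}\|_{2}\,\|G_{N}-1\|_{2}+\delta$, and $\|pF_{N}\|_{2}$ stays bounded along the approximating sequence. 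So the plan closes cleanly. Compared with the paper, your argument trades the $\mathcal{H}^{4}$ trick for an explicit passage to the infinite product; it is a bit longer but arguably more transparent about why the Euler product structure forces cyclicity.
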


Suppose now that the Beurling primes $q$ are chosen as in Theorem~\ref{th: Bohr+RH+LH}, so that we have the Riemann hypothesis at our disposal, and consider the Dirichlet series 
$$f(s) = \frac{1}{\zeta_q(s + 1/2 + \varepsilon)},$$ 
for some $0 < \varepsilon < 1/2$. Through a routine calculation with coefficients, one checks that $f, f^2, 1/f, 1/f^2 \in \mathcal{H}^2_q$. Therefore, there are polynomials $p_n$ which converge to $1/f$ in $\mathcal{H}^4_q$, so that
$$\|1-p_n f\|_{\mathcal{H}^2_q} \leq \|f\|_{\mathcal{H}^4_q} \|1/f - p_n\|_{\mathcal{H}^4_q}  \to 0, \qquad n \to \infty.$$
Thus, $f$ is outer. On the other hand, it has a zero at $s = 1/2 - \varepsilon$. To disprove Helson's conjecture it only remains to prove that $f$ converges in $\mathbb{C}_0$.
\begin{proposition}
The reciprocal $1/\zeta_q$ of the Beurling zeta function converges in $\{\Re s>1/2\}$.
\end{proposition}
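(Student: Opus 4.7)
The plan is to reduce the claim to a bound on the Möbius summatory function $M_q(x) = \sum_{\nu_n \leq x} \mu_q(\nu_n)$ and then invoke the standard relationship between the growth of partial sums of Dirichlet coefficients and the abscissa of convergence. Concretely, I want to establish that $M_q(x) \ll_{\varepsilon} x^{1/2+\varepsilon}$ for every $\varepsilon > 0$, from which $\sigma_c(1/\zeta_q) \leq 1/2$ follows by Abel summation in the generalized Dirichlet series setting.

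First, I would exploit property~(a) from the proof of Theorem~\ref{th: Bohr+RH+LH}: the reciprocal admits the explicit factorization
\[
\frac{1}{\zeta_q(s)} = \frac{(s-1)e^{-Z(s)}}{s},
\]
which is analytic in $\{\Re s > 1/2\}$ (with a removable singularity at $s=1$). Since $|Z(s)| \ll_{\sigma_0} \sqrt{\log(|t|+2)}$ on $\{\Re s \geq \sigma_0\}$ for every $\sigma_0 > 1/2$, we also get the sub-polynomial vertical growth bound $|1/\zeta_q(\sigma+it)| \ll_{\sigma_0} (|t|+2)\,e^{C_{\sigma_0}\sqrt{\log(|t|+2)}}$ there. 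In particular, $1/\zeta_q$ is of zero order in each such half-plane.

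Second, I would apply truncated Perron inversion to the Dirichlet series for $1/\zeta_q$, shifting the contour from a line $\Re s = c > 1$ down to $\Re s = 1/2 + \varepsilon$. No residues are encountered en route since $1/\zeta_q$ is analytic throughout $\{\Re s > 1/2\}$; the two horizontal contour pieces and the shifted vertical line are all controlled using the bound on $1/\zeta_q$ above, and optimizing the truncation parameter $T$ yields $M_q(x) \ll_{\varepsilon} x^{1/2+\varepsilon}$. A subsequent partial summation gives the stated convergence of the series in $\{\Re s > 1/2\}$.

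The main obstacle is the care required in the Perron step, where the available vertical control on $1/\zeta_q$ is only of type $e^{C\sqrt{\log(|t|+2)}}$. However, this is precisely the analysis already invoked to establish property~(\ref{density}) of Theorem~\ref{th: Bohr+RH+LH}, namely the estimate $N_q(x) = ax + O_{\varepsilon}(x^{1/2+\varepsilon})$, with references to \cite{HL06, ZHA07}. We would reuse that template verbatim, with $\zeta_q$ replaced by $1/\zeta_q$ and the role of $N_q(x) - ax$ taken over by $M_q(x)$, so no essentially new ideas beyond that template are needed.
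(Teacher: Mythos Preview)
Your approach is essentially the paper's: establish that $1/\zeta_q$ has zero order in $\{\Re s>1/2\}$ from property~(a), then run Perron inversion (the paper cites \cite[Section~9.44]{TIT58}) to bound the summatory function and deduce convergence by partial summation. Two small remarks: the factor $(|t|+2)$ in your displayed bound is spurious, since $|(s-1)/s|\le 1+1/\sigma_0$ on $\{\Re s\ge\sigma_0\}$ (and with that factor present the bound would \emph{not} be zero order); and, as the paper makes explicit, it is Bohr's condition \eqref{BCalt} that controls the error terms in the truncated Perron formula for these generalized frequencies.
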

\begin{proof}
The reciprocal $1/\zeta_q$ is of zero order in $\{\Re s> 1/2\}$. This is clear from the proof of Theorem~\ref{th: Bohr+RH+LH}, but it is also well known that this can be deduced from \textit{(\ref{pnt})} and \textit{(\ref{density})} 
using the Borel--Carathéodory and the Hadamard three circles theorems, 
\begin{equation}
\log\zeta_q(\sigma+it)=O\left((\log|t|)^\alpha\right),\qquad\alpha\in(0,1),
\end{equation}
uniformly for $\frac{1}{2}<\sigma_0\leq \sigma\leq 1$. See for example \cite[Theorem~2.3]{HL06} or \cite[Theorem~14.2]{TIT86}. Since we have Bohr's condition, the standard argument \cite[Section 9.44]{TIT58} with Perron's formula then shows that $1/\zeta_q$ is convergent in the half-plane where it is analytic with zero order.\qedhere
\end{proof}

\bibliographystyle{amsplain-nodash} 
\bibliography{rfrn} 
\end{document}